\title{Examples of singular toric varieties with 
certain numerical conditions} 
\author{Hiroshi Sato and Yusuke Suyama} 
\subjclass[2010]{Primary 14M25; Secondary 14J45.}
\date{2019/12/17, version 0.12}
\keywords{toric varieties, Fano varieties, 
toric Mori theory}
\address{Department of Applied Mathematics, Faculty of Sciences, 
Fukuoka University, 8-19-1, Nanakuma, Jonan-ku, Fukuoka 814-0180, Japan}
\email{hirosato@fukuoka-u.ac.jp}
\address{Department of Mathematics, Graduate School of 
Science, Osaka University, Toyonaka, Osaka 560-0043, Japan}
\email{y-suyama@cr.math.sci.osaka-u.ac.jp}
\newcommand{\NE}[0]{{\operatorname{NE}}}
\newcommand{\N}[0]{{\operatorname{N}}}
\newcommand{\G}[0]{{\operatorname{G}}}
\newcommand{\Z}[0]{{\operatorname{Z}}}
\newtheorem{thm}{Theorem}[section]
\newtheorem{lem}[thm]{Lemma}
\newtheorem{prop}[thm]{Proposition}
\newtheorem*{claim}{Claim}
\theoremstyle{definition}
\newtheorem{ex}[thm]{Example}
\newtheorem{defn}[thm]{Definition}
\newtheorem{que}[thm]{Question}
\newtheorem*{ack}{Acknowledgments}       
\begin{document}
\bibliographystyle{amsalpha+}

\begin{abstract}
We give various examples of $\mathbb{Q}$-factorial 
projective toric varieties such that 
the sum of the squared torus invariant prime divisors 
is positive. We also determine the generators for 
the cone of effective $2$-cycles on a toric variety of 
Picard number two. 
This result is convenient to explain our examples. 
\end{abstract}

\maketitle

\tableofcontents
\section{Introduction} 

In \cite{satosumi}, the following concepts were introduced: 
\begin{defn}[{\cite[Definition 3.1]{satosumi}}]\label{gammapositive}
Let $X$ be a $\mathbb{Q}$-factorial projective toric 
$d$-fold. Put 
\[
\gamma_2=\gamma_2(X):=D_1^2+\cdots+D_n^2\in\N^2(X),
\]
where $D_1,\ldots,D_n$ be the torus invariant prime divisors. 

If $\gamma_2\cdot S>0$ (resp. $\ge 0$) for any subsurface 
$S\subset X$, 
then we say that $X$ is $\gamma_2$-{\em positive}  
(resp. $\gamma_2$-{\em nef}).   
\end{defn}

When $X$ is smooth, it is expected that $\gamma_2$-positive or 
$\gamma_2$-nef toric varieties have good geometric 
properties (see \cite{nobili}, \cite{sato1} and \cite{sato2}. 
Also see Questions \ref{question1} and \ref{question2} 
below). 
We should remark that $\frac{1}{2}\gamma_2(X)$ is the 
second Chern character ${\rm ch}_2(X)$ of $X$ in this case. 
It was confirmed that these properties hold for the case where 
$X$ is a $\mathbb{Q}$-factorial terminal 
toric Fano $3$-fold in \cite{satosumi}. 
Therefore, \cite{satosumi} posed the following questions:

\begin{que}[{\cite[Question 5.4]{satosumi}}]\label{question1}
Does there exist a $\mathbb{Q}$-factorial 
terminal projective $\gamma_2$-positive toric 
variety $X$ of $\rho(X)\ge 2$?
\end{que}

\begin{que}[{\cite[Question 5.6]{satosumi}}]\label{question2}
For any $\mathbb{Q}$-factorial terminal projective 
$\gamma_2$-nef toric 
$d$-fold of $\rho(X)\ge 2$, does one of the following 
hold? 
\begin{enumerate}
\item There exists a Fano contraction $\varphi:X\to \overline{X}$ 
such that $\overline{X}$ is a 
$\gamma_2$-nef toric $(d-1)$-fold. 
\item There exists a toric finite morphism 
$\pi:X'\to X$ such that $X'$ is a direct product of lower-dimensional  
$\gamma_2$-nef toric varieties.
\end{enumerate}
\end{que}

In this paper, we give answers for these questions 
by giving certain explicit examples (see Examples 
\ref{terminal4dim}, \ref{terminalanydim} and \ref{gorenstein3dim}, 
and Theorem \ref{Gorensteinsurface}). According to 
these examples, we see that 
higher-dimensional $\gamma_2$-positive or $\gamma_2$-nef 
singular toric varieties do {\em not} have good geometric properties 
like smooth cases. 

\begin{ack}
The first author was partially supported by JSPS KAKENHI 
Grant Number JP18K03262. 
The second author was partially supported 
by JSPS KAKENHI 
Grant Number JP18J00022. 
\end{ack}

\section{Preliminaries}

In this section, we introduce some basic results and notation of 
toric varieties. 
For the details, please see \cite{cls}, 
\cite{fulton} and  \cite{oda}. For the toric Mori theory, 
see also \cite{fujino-sato}, \cite[Chapter 14]{matsuki} 
and \cite{reid}.

Let $X=X_\Sigma$ be the toric $d$-fold associated to a fan 
$\Sigma$ in $N=\mathbb{Z}^d$ 
over an algebraically closed field $k$ of arbitrary characteristic. 
We will use the notation $\Sigma=\Sigma_X$ to denote the fan 
associated to a toric variety $X$. 
We denote the Picard number of $X$ by $\rho(X)$. 
Put $N_{\mathbb{R}}:=N\otimes\mathbb{R}$. 
There exists a one-to-one correspondence between 
the $r$-dimensional cones in $\Sigma$ and the torus invariant 
subvarieties of dimension $d-r$ in $X$. Let $\G(\Sigma)$ 
be the set of primitive generators for $1$-dimensional cones in $\Sigma$. 
Thus, for $v\in\G(\Sigma)$, 
we have the torus invariant prime divisor corresponding to 
$\mathbb{R}_{\ge 0}v\in\Sigma$.

\medskip

Let $X$ be a projective toric $d$-fold. 
For $1\le r\le d$, 
we put 
\[
\Z_r(X):=\{\mbox{the }r\mbox{-cycles on } X\},
\mbox{ while }
\Z^r(X):=\{\mbox{the }r\mbox{-cocycles on } X\}.
\]
We introduce the numerical equivalence $\equiv$ on $\Z_r(X)$ 
and $\Z^r(X)$ as follows: For $C\in\Z_r(X)$, we define 
$C\equiv 0$ if $D\cdot C=0$ for any $D\in \Z^r(X)$, while 
for $D\in\Z^r(X)$, we define 
$D\equiv 0$ if $D\cdot C=0$ for any $C\in \Z_r(X)$. 
We put 
\[
\N_r(X):=\left(\Z_r(X)\otimes{\mathbb R}\right)/\equiv, 
\mbox{ while }
\N^r(X):=\left(\Z^r(X)\otimes \mathbb R\right)/\equiv.
\] 
We denote the cone of effective 
$r$-cycles of $X$ by $\NE_r(X)\subset\N_r(X)$. 
$\NE_r(X)$ is a strongly convex rational polyhedral cone in $\N_r(X)$. 

For $\NE_1(X)=\NE(X)$, that is, the ordinary {\em Kleiman-Mori cone}, 
there is a good description of $1$-cycles. So, let $X$ be 
a $\mathbb{Q}$-factorial projective toric $d$-fold.  Let $C=C_\tau$ 
be the torus invariant curve corresponding to a $(d-1)$-dimensional cone 
$\tau$ generated by $x_1,\ldots,x_{d-1}$, 
where $x_1,\ldots,x_{d-1}\in\G(\Sigma)$. 
Then, there exist exactly two maximal cone $y_1+\tau$ and $y_2+\tau$ 
which contain $\tau$ as a face, where $y_1,y_2\in\G(\Sigma)$. 
So, we have the linear relation
\[
a_1y_1+a_2y_2+b_1x_1+\cdots+b_{d-1}x_{d-1}=0,
\]
where $a_1,a_2,b_1,\ldots,b_{d-1}\in\mathbb{Q}$ and $a_1,a_2>0$. 
We call this equality the {\em wall relation} for $\tau$. 
The wall relation is determined up to multiple of positive rational numbers. 
If $C$ spans an extremal ray of $\NE(X)$, 
we say that the wall relation for $\tau$ is {\em extremal}.

\medskip


We end this section by determining the structure of $\NE_2(X)$, which 
is useful to describe the examples in Section \ref{counter}. 

\begin{thm}\label{ne2gene}
If $X=X_\Sigma$ is a $\mathbb{Q}$-factorial 
projective toric $d$-fold of $\rho(X)=2$, then 
$\NE_2(X)$ is generated by at most 
$3$ torus invariant surfaces. 
\end{thm}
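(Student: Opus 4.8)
For a $\mathbb{Q}$-factorial projective toric variety of Picard number $2$, I want to exploit the fact that the secondary/GKZ fan structure on $\mathbb{R}^2$ is extremely rigid. The key is the Gale dual / Cox ring picture: the $n = d+2$ primitive generators $v_1,\dots,v_n$ of the rays satisfy a $2$-dimensional space of linear relations, so their "degrees" (the images of the standard basis vectors under the map $\mathbb{Z}^n \to \operatorname{Cl}(X) \cong \mathbb{Z}^2$ up to torsion) are $n$ vectors $w_1,\dots,w_n$ in $\mathbb{R}^2 \cong \N^1(X)_\mathbb{R}$. Let me think about how this organizes the geometry.

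First I would set up the correspondence between torus invariant surfaces and $(d-2)$-dimensional cones $\sigma \in \Sigma$. Since $\rho(X)=2$ and $X$ is $\mathbb{Q}$-factorial projective, $n = d+2$, so a torus invariant surface corresponds to a choice of $d-2$ of the rays spanning a cone, i.e.\ to omitting a pair $\{i,j\}$ of rays that lie in a common maximal cone; the class $[S_{ij}] \in \NE_2(X)$ of such a surface. The plan is to compute, for each such surface, its numerical class, and show that all these classes lie in a cone generated by at most three of them. Concretely, I would order the degree vectors $w_1,\dots,w_n$ by their slopes (angles) in the plane $\N^1(X)_\mathbb{R} \cong \mathbb{R}^2$. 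Projectivity forces these vectors to positively span the plane, and the maximal cones of $\Sigma$ correspond exactly to the chambers of the secondary fan; the surfaces correspond to complementary data.

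Next I would reduce the problem to a one-parameter bookkeeping. Because $\N_2(X)$ is $2$-dimensional (by the toric hard Lefschetz / Poincaré-type duality $\N_2(X) \cong \N^1(X)^\vee$ for $\rho = 2$, both being $2$-dimensional), $\NE_2(X)$ is a strongly convex rational polyhedral cone in a plane, hence generated by its two extremal rays. So I must show each extremal ray is generated by a torus invariant surface, and account for the possibility that I need a third generator — this arises exactly when an extremal ray class is realized not by a single surface but where a "central" surface class is needed to convexly combine the two extremal ones, or when the two-dimensional picture degenerates. I would compute the intersection numbers $D_k \cdot S_{ij}$ explicitly via the combinatorics of the cones, expressing $[S_{ij}]$ in coordinates, and then sort these planar points to identify the extremal ones.

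\textbf{The main obstacle.} The genuinely delicate step is pinning down \emph{why three surfaces can be necessary and three always suffice}, rather than two. In a $2$-dimensional cone the extremal rays are determined by two generators, so naively two surfaces ought to suffice; the subtlety is that the finitely many surface classes, once plotted in $\N_2(X)_\mathbb{R} \cong \mathbb{R}^2$, need not place actual surfaces on both extremal rays simultaneously in a way compatible with effectivity — and the interplay with the splitting of the degree vectors $w_i$ into two groups (the "Grothendieck" cone structure of a projectivized toric bundle, after using that every such $X$ is a toric bundle over a weighted projective space, cf.\ Kleinschmidt-type classification in the smooth case) can force an intermediate generator. I would handle this by a careful case analysis on how many degree vectors fall on each of the (at most three) distinct rays that the $w_i$ occupy, showing that the number of extremal surface classes is bounded by the number of adjacent pairs of these ray-clusters, which is at most three. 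I expect balancing this case analysis against the convexity argument to be where essentially all the work lies; everything else is linear algebra in the plane.
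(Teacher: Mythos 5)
Your starting point---viewing the $d+2$ torus invariant divisor classes as vectors in $\N^1(X)\cong\mathbb{R}^2$ and ordering them by slope---is genuinely the same mechanism the paper uses: there, the two extremal wall relations yield the relations $D_i-\frac{a_i}{a_m}D_m+\frac{d_i}{b_n}E_n=0$ and $E_j-\frac{b_j}{b_n}E_n+\frac{c_j}{a_m}D_m=0$ in $\N^1(X)$, and the classes are sorted by the ratios $d_i/a_i$ and $c_j/b_j$. But the step you lean on to explain the number $3$ is false: $\N_2(X)$ is \emph{not} dual to $\N^1(X)$ and is not $2$-dimensional in general. Already for $X=\mathbb{P}^2\times\mathbb{P}^2$ (a smooth projective toric $4$-fold with $\rho=2$) one has $\dim\N_2(X)=\dim H^4(X,\mathbb{Q})=3$, and $\NE_2(X)$ is the simplicial $3$-dimensional cone spanned by $H_1^2$, $H_1H_2$, $H_2^2$; the intersection pairing identifies $\N_2(X)$ with the dual of $\N^{d-2}(X)$, not of $\N^1(X)$, and hard Lefschetz only gives $b_2\le b_4$. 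So the entire framing ``a planar cone has two extremal rays, and I must account for a mysterious third generator'' collapses; the bound $3$ does not come from convex geometry in a plane, and the ``main obstacle'' you describe is an artifact of this miscount.

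The substantive work, which your sketch never reaches, is the following: a torus invariant surface is, up to a positive rational multiple, the product of the $d-2$ divisor classes cutting it out. Splitting $\G(\Sigma)=\{x_1,\dots,x_m,y_1,\dots,y_n\}$ according to the two extremal walls and using the slope ordering to rewrite each $D_{i_1}$ as a nonnegative combination of $D_{i_2}$ (for $i_1<i_2$) and $E_n$ (and symmetrically for the $E_j$), one pushes every monomial $D_{i_1}\cdots D_{i_k}\cdot E_{j_1}\cdots E_{j_l}$ into the cone spanned by the three ``consecutive-tail'' products $D_p\cdots D_{m-1}\cdot E_q\cdots E_{n-1}$ with $p+q=4$, $p,q\ge 1$; that is where $3$ comes from. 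Two further assertions in your sketch are unsupported: the degree vectors $w_i$ are not confined to three rays of $\N^1(X)$ (generically they occupy many distinct rays, so your ``adjacent ray-cluster'' count does not bound anything), and a singular $\mathbb{Q}$-factorial $X$ with $\rho=2$ need not be a toric bundle over a weighted projective space, so the Kleinschmidt-type reduction is unavailable (and unnecessary). Finally, you would still need the input that $\NE_2(X)$ is generated by torus invariant surfaces at all; the paper quotes Nobili's Proposition 3.2 for this, whereas your argument only addresses which surface classes could be extremal once that is known.
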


\begin{proof}
First, we remark that \cite[Proposition 3.2]{nobili} says that $\NE_2(X)$ is 
generated by torus invariant surfaces. 

Reid's wall description of extremal rays of toric varieties 
tells us that there exist exactly two 
extremal wall relations
\[
a_1x_1+\cdots+a_mx_m=c_1y_1+\cdots+c_{n-1}y_{n-1},
\]
\[
b_1y_1+\cdots+b_ny_n=d_1x_1+\cdots+d_{m-1}x_{m-1}, 
\]
where $\G(\Sigma)=\{x_1,\ldots,x_m,y_1,\ldots,y_n\}$, 
$m,n\ge 2$, $m+n=d+2$, 
$a_1,\ldots,a_m,b_1,\ldots,b_n\in\mathbb{Q}_{>0}$, 
$c_1,\ldots,c_{n-1},d_1,\ldots,d_{m-1}\in\mathbb{Q}_{\ge 0}$.
Without loss of generality, we may assume that 
\[
0\le\frac{d_1}{a_1}\le\frac{d_2}{a_2}
\le\cdots\le\frac{d_{m-1}}{a_{m-1}}\mbox{ and }
0\le\frac{c_1}{b_1}\le\frac{c_2}{b_2}
\le\cdots\le\frac{c_{n-1}}{b_{n-1}}.
\]
By a $\mathbb{R}$-basis $\{x_1,\ldots,x_{m-1},
y_1,\ldots,y_{n-1}\}$ for $N_{\mathbb{R}}$, we obtain 
linear relations
\[
D_i-\frac{a_i}{a_m}D_m+\frac{d_i}{b_n}E_n=0\ (1\le i\le m-1),\ 
E_j-\frac{b_j}{b_n}E_n+\frac{c_j}{a_m}D_m=0\ (1\le j\le n-1) 
\]
in $\N^1(X)$, where $D_i$ and $E_j$ are the 
torus invariant prime divisors corresponding to $x_i$ and $y_j$, 
respectively. 
First, we show the following:
\begin{claim}
For any $1\le i\le m-1$ and $1\le j\le n-1$, 
$D_m$ and $E_n$ are contained in 
the cone $\mathbb{R}_{\ge 0}D_i+
\mathbb{R}_{\ge 0}E_j\subset\N^1(X)$. 
\end{claim}
\begin{proof}[Proof of Claim]
If $d_i=0$, then we have 
$\frac{a_i}{a_m}D_m=D_i$. So, we may 
assume $d_i\neq 0$. 
By the above equalities, we have 
\[
\frac{b_j}{d_i}\left(
D_i-\frac{a_i}{a_m}D_m+\frac{d_i}{b_n}E_n
\right)+
E_j-\frac{b_j}{b_n}E_n+\frac{c_j}{a_m}D_m=0
\]
\[
\Longleftrightarrow\quad 
\frac{b_j}{d_i}D_i+E_j
=\left(
\frac{a_ib_j}{a_md_i}-\frac{c_j}{a_m}
\right)D_m, 
\]
where $\frac{a_ib_j}{a_md_i}-\frac{c_j}{a_m}$ 
has to be positive since $X$ is complete. 
The proof for $E_n$ is completely similar. 
\end{proof}
For $1\le i_1< i_2\le m-1$ and $1\le j_1< j_2\le n-1$, 
we have 
\[
\frac{a_m}{a_{i_1}}D_{i_1}=\frac{a_m}{a_{i_2}}D_{i_2}
+\frac{a_m}{b_n}\left(\frac{d_{i_2}}{a_{i_2}}-\frac{d_{i_1}}{a_{i_1}}\right)E_n
\mbox{ and }
\frac{b_n}{b_{j_1}}E_{j_1}=\frac{b_n}{b_{j_2}}E_{j_2}
+\frac{b_n}{a_m}\left(\frac{c_{j_2}}{b_{j_2}}-\frac{c_{j_1}}{b_{j_1}}\right)D_m.
\]
These equalities mean that $D_{i_1}\in\mathbb{R}_{\ge 0}D_{i_2}+
\mathbb{R}_{\ge 0}E_n\subset \N^1(X)$, while 
$E_{j_1}\in\mathbb{R}_{\ge 0}E_{j_2}+
\mathbb{R}_{\ge 0}D_m\subset \N^1(X)$. Therefore, 
any $2$-cycle $D_{i_1}\cdots D_{i_k}\cdot E_{j_1}\cdots E_{j_l}\ 
(k<m,\ l<n,\ k+l=d-2)$ is contained in the cone generated by 
\[
D_{p}\cdots D_{m-1}\cdot E_{q}\cdots E_{n-1}\ 
(p\ge 1,\ q\ge 1,\ p+q=4)
\]
in $\NE_2(X)$. One can easily see that the possibilities for 
$(p,q)$ are $(1,3)$, $(2,2)$ and $(3,1)$. Thus, 
$\NE_2(X)$ is generated by the three $2$-cycles
\[
S_1:=D_1\cdots D_{m-1}\cdot E_{3}\cdots E_{n-1},\ 
S_2:=D_2\cdots D_{m-1}\cdot E_2\cdots E_{n-1}, 
\]
\[ 
\mbox{and }S_3:=D_3\cdots D_{m-1}\cdot E_1\cdots E_{n-1}, 
\]
where $S_1=0$ (resp. $S_3=0$) if 
$n=2$ (resp. $m=2$). 
These $2$-cycles are obtained by multiplying 
some torus invariant surfaces by positive rational numbers.
\end{proof}

By Theorem \ref{ne2gene}, 
in order to prove the positivity (resp. non-negativity) of $\gamma_2(X)$, 
it is sufficient to check 
the positivity (resp. non-negativity) for the above three $2$-cycles. 
Furthermore, 
\cite[Proposition 3.4]{satosumi} says that $\gamma_2(X)\cdot S_1>0$ 
and $\gamma_2(X)\cdot S_3>0$. 
So, only we have to do is to check 
the positivity (resp. non-negativity) for $S_2$. 
We remark that $\rho(S_2)=2$. 
So, we can apply \cite[Proposition 3.5]{satosumi}. 
We describe them here for the reader's convenience: 
Let $X=X_\Sigma$ be a $\mathbb{Q}$-factorial projective toric 
$d$-fold, and $S\subset X$ a torus invariant subsurface 
of $\rho(S)=2$. 
Let $\tau\in\Sigma$ be a $(d-2)$-dimensional cone 
associated to $S$ and 
$\tau\cap\G(\Sigma)=\{x_1,\ldots,x_{d-2}\}$. 
There exist exactly $4$ maximal cones 
\[
\mathbb{R}_{\ge 0}y_1+\mathbb{R}_{\ge 0}y_3+\tau,\ 
\mathbb{R}_{\ge 0}y_2+\mathbb{R}_{\ge 0}y_3+\tau,\ 
\mathbb{R}_{\ge 0}y_1+\mathbb{R}_{\ge 0}y_4+\tau,\ 
\mathbb{R}_{\ge 0}y_2+\mathbb{R}_{\ge 0}y_4+\tau 
\] 
in $\Sigma$, where $\{y_1,y_2,y_3,y_4\}\subset\G(\Sigma)$. 
Let 
\[
b_1y_1+b_2y_2+c_3y_3+a_1x_1+\cdots+a_{d-2}x_{d-2}=0
\mbox{ and }
\]
\[
b_3y_3+b_4y_4+c_1y_1+e_1x_1+\cdots+e_{d-2}x_{d-2}=0
\]
be the wall relations corresponding to 
$(d-1)$-dimensional cones 
$\mathbb{R}_{\ge 0}y_3+\tau$ and  
$\mathbb{R}_{\ge 0}y_1+\tau$, 
respectively, where $a_1,\ldots,a_{d-2},b_1,b_2,b_3,b_4,
c_1,c_3,e_1,\ldots,e_{d-2}\in\mathbb{Q}$ and 
$b_1,b_2,b_3,b_4>0$. 
Then, the following holds:
\begin{prop}[{\cite[Proposition 3.4]{satosumi}}]\label{rho2lemma}
There exists a positive rational number $\alpha$ such that 
\[
\alpha \gamma_2(X)\cdot S=
-b_3c_1\left(b_1^2+b_2^2+c_3^2+a_1^2+\cdots+a_{d-2}^2\right)
\]
\[
+2b_1b_3\left(b_1c_1+b_3c_3+a_1e_1+\cdots+a_{d-2}e_{d-2}\right)
-b_1c_3\left(b_3^2+b_4^2+c_1^2+e_1^2+\cdots+e_{d-2}^2\right).
\]
\end{prop}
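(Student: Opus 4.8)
The plan is to compute $\gamma_2(X)\cdot S=\sum_{v\in\G(\Sigma)}D_v^2\cdot S$ by restricting everything to the surface $S$ and exploiting that $\rho(S)=2$. First I would observe that $D_v\cap S=\emptyset$ unless $\mathbb{R}_{\ge 0}v+\tau$ is a cone of $\Sigma$, so only the generators $x_1,\dots,x_{d-2}$ and $y_1,y_2,y_3,y_4$ contribute, and in that case $D_v^2\cdot S=(i^*D_v)^2$ computed on $S$, where $i\colon S\hookrightarrow X$ is the inclusion. Since $\rho(S)=2$, the group $\N_1(S)$ is two-dimensional; I would take as a basis the two torus invariant curves $\ell_3$ and $\ell_1$ on $S$ cut out by the walls $\mathbb{R}_{\ge 0}y_3+\tau$ and $\mathbb{R}_{\ge 0}y_1+\tau$, that is, the curves whose wall relations are precisely the two displayed relations.

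The key reduction is a linear-algebra observation coming from Reid's wall description. For every $w\in\G(\Sigma)$ the intersection numbers $D_w\cdot\ell_3$ and $D_w\cdot\ell_1$ equal, up to fixed positive constants $\kappa_3$ and $\kappa_1$ independent of $w$, the coefficient of $w$ in the first, resp. second, wall relation (this is the pairing between $\N^1(X)$ and the space of relations among the rays, under which a wall curve is identified with its wall relation). Because $\{\ell_3,\ell_1\}$ is a basis of $\N_1(S)$, the class $i^*D_v\in\N^1(S)$ is determined by the pair $\bigl(D_v\cdot\ell_3,\,D_v\cdot\ell_1\bigr)$, hence depends linearly on the pair $(s_v,t_v)$ of coefficients of $v$ in the two wall relations. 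Consequently $D_v^2\cdot S=(i^*D_v)^2$ is one fixed quadratic form $Q(s_v,t_v)=As_v^2+2Bs_vt_v+Ct_v^2$, the same for every $v$. Summing over $v$ and writing $R_1,R_2$ for the coefficient vectors of the two wall relations, the terms organize themselves exactly as $\gamma_2(X)\cdot S=A\|R_1\|^2+2B\,(R_1\cdot R_2)+C\|R_2\|^2$, and one checks directly that $\|R_1\|^2=b_1^2+b_2^2+c_3^2+a_1^2+\cdots+a_{d-2}^2$, that $\|R_2\|^2=b_3^2+b_4^2+c_1^2+e_1^2+\cdots+e_{d-2}^2$, and that $R_1\cdot R_2=b_1c_1+b_3c_3+a_1e_1+\cdots+a_{d-2}e_{d-2}$, matching the three bracketed expressions in the statement.

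It remains to determine $A,B,C$. I would read off the Gram matrix of $\{\ell_3,\ell_1\}$ from the very same pairing: the self-intersections are governed by the coefficients $c_3$ and $c_1$ of $y_3$ and $y_1$ in their own wall relations, while $\ell_3\cdot\ell_1$ is governed by $b_1$ (and, by consistency, by $b_3$); inverting this $2\times 2$ matrix yields $(A,B,C)$ proportional to $(-b_3c_1,\,b_1b_3,\,-b_1c_3)$, which is the required shape. The main obstacle is the sign: I must ensure that the common proportionality constant is positive, so that a positive $\alpha$ exists. This is exactly where the geometry enters through the Hodge index theorem---on the projective surface $S$ with $\rho(S)=2$ the intersection form has signature $(1,1)$, so the determinant of the Gram matrix is negative, and this negativity is what converts the raw proportionality constant, together with the factor $b_3>0$, into a genuinely positive $\alpha$. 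All remaining normalization constants $\kappa_1,\kappa_3$ and the positive multiplicities relating $i^*D_{y_k}$ to $\ell_k$ are absorbed into $\alpha$, so only routine bookkeeping separates this outline from the displayed identity.
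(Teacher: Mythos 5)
Your proposal cannot be checked against an in-paper argument: Proposition \ref{rho2lemma} is imported verbatim from \cite[Proposition 3.4]{satosumi} and this paper gives no proof of it, so the only question is whether your derivation stands on its own. It does, up to routine details. The two pillars are correct: (i) only the rays $x_1,\dots,x_{d-2},y_1,\dots,y_4$ meet $S$, and for each such $v$ the pair $(D_v\cdot\ell_3,D_v\cdot\ell_1)$ is, by Reid's wall description, a fixed positive multiple of the pair of coefficients of $v$ in the two wall relations; (ii) since these two pairings determine $i^*D_v\in\N^1(S)$, the sum $\sum_v(i^*D_v)^2$ collapses to $A\|R_1\|^2+2B(R_1\cdot R_2)+C\|R_2\|^2$, and the three bracketed quantities in the statement are exactly $\|R_1\|^2$, $R_1\cdot R_2$, $\|R_2\|^2$. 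Your identification of the Gram matrix entries with $c_3$, $c_1$ and $b_1$ (equivalently $b_3$) is right, and inverting it does give $(A,B,C)$ proportional to $(-b_3c_1,\,b_1b_3,\,-b_1c_3)$ with overall constant of sign opposite to $\det G$, so the Hodge index theorem (signature $(1,1)$ on $\N^1(S)$) is indeed what makes $\alpha>0$; I verified that the normalization constants $\kappa_1,\kappa_3$ and the multiplicities $m_k$ in $i^*D_{y_k}=m_k\ell_k$ cancel against the symmetry relation $\kappa_3b_1/m_1=\kappa_1b_3/m_3$ to leave exactly the stated coefficient vector. Two small points you should make explicit in a full write-up: that $\{\ell_1,\ell_3\}$ really is a basis of $\N_1(S)$ (use $\ell_1\cdot\ell_2=0$ and $\ell_2\cdot\ell_3>0$ to rule out proportionality), and that neither $\ell_1$ nor $\ell_3$ is numerically trivial, which is needed both for the basis claim and to apply the index theorem. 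As a sanity check, your formula reproduces the value $2$ computed for $D_4$ in Example \ref{gorenstein3dim}.
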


\section{Examples of $\gamma_2$-positive toric varieties} 
\label{counter} 

We need the following lemma to explain the singularities 
in the examples below. 

\begin{lem}\label{terminalsing}
Let $d\ge 3$ and $e_1,\ldots,e_d$ the standard basis for $N$. 
Put 
\[
x_1:=e_1,\ldots,x_{d-1}:=e_{d-1},x_d:=ce_d-\sum_{i=p}^{d-1}e_i,
\]
where $1\le p\le d-1$, $c\in\mathbb{Z}$ and $0<c<d-p+1$. 
Then, the cone $\mathbb{R}_{\ge 0}x_1+\cdots+\mathbb{R}_{\ge 0}x_d
\subset N_{\mathbb{R}}$ 
is terminal. 
\end{lem}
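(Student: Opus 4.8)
The plan is to apply the standard combinatorial criterion for terminality of a simplicial toric cone. First I would record that $\sigma:=\mathbb{R}_{\ge 0}x_1+\cdots+\mathbb{R}_{\ge 0}x_d$ is indeed a $d$-dimensional simplicial cone with primitive generators: the $x_i$ are linearly independent because $x_1,\dots,x_{d-1}$ are distinct standard basis vectors and $x_d$ has nonzero $e_d$-component ($c\neq 0$), and $x_d$ is primitive since its nonzero coordinates $-1$ and $c$ have greatest common divisor $1$. Let $u$ be the unique $\mathbb{Q}$-linear form on $N_{\mathbb{R}}$ with $u(x_i)=1$ for every $i$. The criterion I will use is: $\sigma$ is terminal if and only if, writing each nonzero class of $N/L$ with $L:=\mathbb{Z}x_1+\cdots+\mathbb{Z}x_d$ uniquely as $\sum_i\lambda_i x_i$ with $\lambda_i\in[0,1)\cap\mathbb{Q}$, one has $\sum_i\lambda_i>1$. (This records that the only lattice points $m\in\sigma$ with $u(m)\le 1$ are $0,x_1,\dots,x_d$.)

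Next I would compute $L$ and the classes. Since $x_1,\dots,x_{d-1}$ span $\mathbb{Z}^{d-1}\times\{0\}$ and $x_d\equiv c\,e_d$ modulo this sublattice, we get $L=\mathbb{Z}^{d-1}\times c\mathbb{Z}$, so $N/L\cong\mathbb{Z}/c\mathbb{Z}$ is cyclic of order $c$, generated by the class of $e_d$. The nonzero classes are therefore represented by $e_d,2e_d,\dots,(c-1)e_d$, and the key identity
\[
k\,e_d=\frac{k}{c}\Bigl(x_d+\sum_{i=p}^{d-1}x_i\Bigr)
\]
exhibits the $[0,1)$-coefficients of $k\,e_d$ as $\lambda_i=k/c$ for $i\in\{p,\dots,d-1,d\}$ and $\lambda_i=0$ otherwise, all lying in $[0,1)$ for $1\le k\le c-1$. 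Counting the nonzero coefficients gives
\[
\sum_i\lambda_i=\frac{k}{c}\,(d-p+1).
\]

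Finally, the terminality criterion reduces to checking $\frac{k}{c}(d-p+1)>1$ for each $1\le k\le c-1$. The smallest value occurs at $k=1$, and the hypothesis $c<d-p+1$ gives $\frac{1}{c}(d-p+1)>1$, so the inequality holds for all admissible $k$ (and when $c=1$ there are no nonzero classes, so the cone is even smooth). I expect the only point requiring care --- the ``hard part,'' such as it is --- to be confirming that the listed classes are complete and that their coefficients are the genuine $[0,1)$-representatives; both follow at once from $N/L\cong\mathbb{Z}/c\mathbb{Z}$ being cyclic of order $c$ generated by $e_d$, together with the displayed identity.
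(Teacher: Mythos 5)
Your proof is correct, but it is organized differently from the paper's. The paper verifies the terminality criterion in its most primitive form: it takes an arbitrary lattice point $x=a_1x_1+\cdots+a_dx_d$ with $a_i\in\mathbb{Q}_{\ge 0}$ and $a_1+\cdots+a_d\le 1$, and argues coordinate by coordinate (first forcing $a_1=\cdots=a_{p-1}=0$ by integrality, then $a_i-a_d\in\mathbb{Z}\cap(-1,1)=\{0\}$ for $p\le i\le d-1$, then ruling out $a_d\neq 0$ via $ca_d\ge 1$ and the hypothesis $c<d-p+1$) to conclude $x\in\{0,x_1,\ldots,x_d\}$. You instead reduce to the finitely many ``box points'' by computing the quotient $N/L\cong\mathbb{Z}/c\mathbb{Z}$ generated by $[e_d]$, exhibiting the $[0,1)$-representative of each nonzero class explicitly as $\frac{k}{c}\bigl(x_d+\sum_{i=p}^{d-1}x_i\bigr)$, and checking that its coefficient sum $\frac{k}{c}(d-p+1)$ exceeds $1$; this is where the hypothesis $c<d-p+1$ enters, in exactly the same role as in the paper. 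Your route buys a cleaner structural picture --- it identifies the singularity as a cyclic quotient of order $c$ and avoids the case analysis on the coefficients --- at the cost of invoking the box-point reformulation of terminality and the correspondence between box points and classes of $N/L$, which you correctly justify. The paper's argument is more elementary and self-contained but less illuminating about what the singularity actually is. Both are complete and correct.
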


\begin{proof}
The hyperplane passing through $x_1,\ldots,x_d$ is 
\[
\left\{(t_1,\ldots,t_d)\in N_{\mathbb{R}}^d\,\left|\,
t_1+\cdots+t_{d-1}+\frac{d-p+1}{c}t_d=1\right.\right\}.
\]
For $(a_1,\ldots,a_d)\in\mathbb{Q}_{\ge 0}^d$, 
suppose that 
\[
x:=a_1x_1+\cdots+a_dx_d=
a_1e_1+\cdots+a_{p-1}e_{p-1}+
(a_p-a_d)e_p+\cdots+
(a_{d-1}-a_d)e_{d-1}+ca_de_d
\in\mathbb{Z}^d
\]
and that 
\[
a_1+\cdots+a_{p-1}+
(a_p-a_d)+\cdots+(a_{d-1}-a_d)+
\frac{d-p+1}{c}\times ca_d
=a_1+\cdots+a_d\le 1.
\]
If $a_i=1$ for $1\le i\le d$, then $x=x_i$. So we may assume 
$a_1,\ldots,a_d<1$. 
Then, since $a_1,\ldots,a_{p-1}\in\mathbb{Z}$, 
$a_1=\cdots=a_{p-1}=0$. So, we have 
$0\le a_p+\cdots+a_d\le 1$. 
For any $p\le i\le d-1$, we have $-1< a_i-a_d< 1$. However, 
$a_i-a_d\in\mathbb{Z}$ means that $a_i-a_d=0$. 
If $a_d\neq 0$, then $ca_d\ge 1$ holds 
because $ca_d\in\mathbb{Z}$. This is 
impossible, since 
\[
a_p+\cdots+a_d=(d-p+1)\times a_d\ge
\frac{d-p+1}{c}>1.
\]
Therefore, $a_p=\cdots=a_d=0$. Thus, $x\in \{x_1,\ldots,x_d,0\}$. 
\end{proof}

The following is an answer to 
Question \ref{question1}. Moreover, this is 
a counterexample to Question \ref{question2}, too. 

\begin{ex}\label{terminal4dim}
Let $X=X_{\Sigma}$ be a $\mathbb{Q}$-factorial 
terminal  
toric Fano $4$-fold such that 
the primitive generators of $1$-dimensional 
cones in $\Sigma$ are 
\[
x_1=(1,0,0,0),\ x_2=(0,1,0,0),\ x_3=(0,0,1,0),
\]
\[
x_4=(0,0,0,1), x_5=(-1,-2,-1,0), 
x_6=(0,-1,-2,-1).
\]
The singular locus of $X$ is $S_{1,5}\cup S_{4,6}$, where 
$S_{1,5}$ and $S_{4,6}$ are the torus invariant surfaces 
corresponding to $\mathbb{R}_{\ge 0}x_1+\mathbb{R}_{\ge 0}x_5$ 
and $\mathbb{R}_{\ge 0}x_4+\mathbb{R}_{\ge 0}x_6$, 
respectively. 
One can easily see that $X$ is terminal by Lemma \ref{terminalsing}. 
The extremal wall relations of $\Sigma$ are 
\[
2x_1+3x_2+2x_5=x_4+x_6\mbox{ and }
3x_3+2x_4+2x_6=x_1+x_5.
\]
Let $D_1,\ldots,D_6$ be the torus invariant 
prime divisors corresponding to 
$x_1,\ldots,x_6$, respectively. 
Theorem \ref{ne2gene} tells us that 
it is sufficient to show the positivity 
for $D_5D_6$. 
The wall relations associated to 
$\mathbb{R}_{\ge 0}x_1+\mathbb{R}_{\ge 0}x_5+\mathbb{R}_{\ge 0}x_6$ 
and $\mathbb{R}_{\ge 0}x_3+\mathbb{R}_{\ge 0}x_5+\mathbb{R}_{\ge 0}x_6$ 
are 
\[
3x_3+2x_4-x_1-x_5+2x_6=0\mbox{ and }
x_1+2x_2+x_3+x_5=0, 
\]
respectively. By Proposition \ref{rho2lemma}, 
there exists a positive rational number $\alpha$ such that
\[
\alpha\gamma_2(X)\cdot D_5D_6  =  -1\times 1\times (3^2+2^2+(-1)^2+(-1)^2+2^2)
+2\times 3\times 1\times (3\times 1+1\times (-1)+(-1)\times 1)
\]
\[
-3\times(-1)\times(1^2+2^2+1^2+1^2) 
 =  8>0.
\]
Therefore, $X$ is $\gamma_2$-positive, 
but $\rho(X)=2$. We should remark that 
$\G(\Sigma)$ has no centrally symmetric pair. 
\end{ex}

For any dimension $d\ge 4$, 
there exists a toric $d$-fold satisfying the condition of 
Question \ref{question1}: 
\begin{ex}\label{terminalanydim}
Let $d\ge 4$ and $\{e_1,\ldots,e_d\}$ the standard basis for $N=\mathbb{Z}^d$. 
Put 
\[
x_1:=e_1,\ \ldots,\ x_{d-2}:=e_{d-2},\ 
x_{d-1}:=-\left(e_1+\cdots+e_{d-2}+(d-2)e_{d-1}\right),\ 
x_d:=e_{d-1},\]
\[
y_1:=-(e_{d-1}+e_d),\ y_2=e_d.
\]
Let $X=X_{\Sigma}$ be the $\mathbb{Q}$-factorial terminal 
toric Fano $d$-fold of $\rho(X)=2$ 
such that $\G(\Sigma)=\{x_1,\ldots,x_{d},y_1,y_2\}$. 
The singular locus of $X$ is the torus invariant curve corresponding to 
the cone $\mathbb{R}_{\ge 0}x_1+\cdots+\mathbb{R}_{\ge 0}x_{d-1}$. 
One can easily confirm that this singularity is terminal by Lemma 
\ref{terminalsing}. 
The extremal wall relations of $\Sigma$ are 
\[
x_1+\cdots+x_{d-1}+(d-2)x_d=0\mbox{ and }
(d-2)y_1+(d-2)y_2=x_1+\cdots+x_{d-1}.
\]
By Theorem \ref{ne2gene}, all we have to do is to show 
$\gamma_2(X)\cdot D_2\cdots D_{d-1}>0$, where 
$D_1,\dots,D_d,E_1,E_2$ are the torus invariant prime divisors 
corresponding to $x_1,\ldots,x_d,y_1,y_2$, respectively. 
The wall relations associated to 
\[
\mathbb{R}_{\ge 0}x_1+\mathbb{R}_{\ge 0}x_2+
\cdots+\mathbb{R}_{\ge 0}x_{d-1}\mbox{ and }
\mathbb{R}_{\ge 0}y_1+\mathbb{R}_{\ge 0}x_2+
\cdots+\mathbb{R}_{\ge 0}x_{d-1}
\]
are 
\[
(d-2)y_1+(d-2)y_2-x_1-x_2-\cdots -x_{d-1}=0\mbox{ and }
x_1+(d-2)x_d+x_2+\cdots+x_{d-1}=0,
\]
respectively. Proposition \ref{rho2lemma} says that 
for $\alpha\in\mathbb{Q}_{>0}$, we have 
\[
\alpha\gamma_2(X)\cdot D_2\cdots D_{d-1}
\]
\[
=2\times (d-2)\times 1\times
(-1)\times (d-1)-(d-2)\times(-1)\times
\left(1^2+(d-2)^2+1^2\times (d-2)\right)
\]
\[
=(d-2)^3-(d-2)(d-1)=(d-2)((d-3)^2+(d-4))>0.
\]
Thus, $X$ is $\gamma_2$-positive. 
Moreover, $\G(\Sigma)$ has no centrally symmetric pair 
in this case, too. 
\end{ex}


Next, we consider Question \ref{question1} 
for {\em Gorenstein} $\mathbb{Q}$-factorial 
projective 
toric $d$-folds. We remark that 
there exists a counterexample to 
Question \ref{question2} in this 
situation 
(see \cite[Remark 5.7]{satosumi}). 

The following is the answer to Question \ref{question1} 
for $d=2$.

\begin{thm}\label{Gorensteinsurface}
Let $S$ be a Gorenstein projective 
toric surface. Then, $S$ is 
$\gamma_2$-positive if and only if 
$\rho(S)=1$. 
\end{thm}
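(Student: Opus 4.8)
The plan is to reduce the statement to a single numerical inequality for $\gamma_2(S)=\sum_{i=1}^n D_i^2$, where $D_1,\dots,D_n$ are the torus invariant prime divisors (equivalently, the torus invariant curves) of $S$. Since $S$ is itself a surface, the only subsurface is $S$, and $\gamma_2(S)\cdot S$ is just the intersection number $\sum_i D_i^2\in\mathbb{R}$; thus $S$ is $\gamma_2$-positive exactly when this number is positive. Writing $n$ for the number of rays of $\Sigma_S$, we have $\rho(S)=n-2$, so $\rho(S)=1$ means $n=3$ and $\rho(S)\ge 2$ means $n\ge 4$.

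For the direction $\rho(S)=1\Rightarrow\gamma_2$-positive I would argue directly. When $n=3$ the three primitive generators satisfy a single relation $a_1v_1+a_2v_2+a_3v_3=0$ with all $a_i>0$ (the origin lies in the interior of the fan), and for each $i$ this is precisely the wall relation for the ray $v_i$, in which the coefficient of $v_i$ is positive. A direct computation with the wall relation, as in Section 2, shows that $D_i^2$ then has the sign of that coefficient, so $D_i^2>0$ for every $i$ and hence $\gamma_2(S)>0$. This direction does not even use the Gorenstein hypothesis.

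The substance is the converse, which I would prove in contrapositive form: if $n\ge 4$ then $\gamma_2(S)\le 0$. The key step is the closed formula
\[
\gamma_2(S)=\sum_{i=1}^n D_i^2=12-\sum_{i=1}^n\left(d_i+\frac{2}{d_i}\right),
\]
where $d_i=\mult(\sigma_i)$ is the multiplicity of the $i$-th maximal cone $\sigma_i=\mathbb{R}_{\ge0}v_i+\mathbb{R}_{\ge0}v_{i+1}$. To obtain it I would combine three standard inputs. First, $K_S=-\sum_iD_i$ gives $K_S^2=\sum_iD_i^2+2\sum_iD_i\cdot D_{i+1}=\sum_iD_i^2+2\sum_i 1/d_i$, since adjacent toric curves meet in $D_i\cdot D_{i+1}=1/d_i$ while non-adjacent ones are disjoint. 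Second, a two-dimensional Gorenstein toric singularity is a du Val singularity of type $A_{d_i-1}$, so the minimal resolution $\pi:\tilde S\to S$ is crepant and $K_{\tilde S}^2=K_S^2$. Third, Noether's formula on the smooth complete toric surface $\tilde S$ reads $K_{\tilde S}^2=12\chi(\mathcal O_{\tilde S})-e(\tilde S)=12-\tilde n$, because $\tilde S$ is rational and its Euler number equals its number of rays $\tilde n$; as the $A_{d_i-1}$ cone contributes $d_i-1$ exceptional rays, one has $\tilde n=\sum_i d_i$. Assembling these identities yields the displayed formula.

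Granting the formula, the conclusion is elementary: for every integer $d\ge 1$ one has $d+2/d\ge 3$, so $\sum_i(d_i+2/d_i)\ge 3n$ and therefore $\gamma_2(S)\le 12-3n$. When $\rho(S)\ge 2$, i.e. $n\ge 4$, this gives $\gamma_2(S)\le 0$, so $S$ is not $\gamma_2$-positive, completing the equivalence. The main obstacle is the middle step of the formula: verifying that a Gorenstein toric surface has only $A$-type du Val singularities, so that the minimal resolution is crepant and $K^2$ is preserved under $\pi$; once $K^2$ is transported to the smooth model, Noether's formula does the rest. As a sanity check I would test the formula on $\mathbb{P}^2$, $\mathbb{P}(1,1,2)$ and $\mathbb{P}(1,2,3)$ (the Gorenstein cases with $n=3$), where it returns $3,3,7/3$, and on $\mathbb{P}^1\times\mathbb{P}^1$ ($n=4$), where it returns $0$.
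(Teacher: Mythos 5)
Your proof is correct, but it follows a genuinely different route from the paper's. The paper reduces everything to the smooth case: it passes to the crepant resolution $\pi:\overline{S}\to S$, shows via \cite[Proposition 4.2]{satosumi} that each primitive crepant contraction changes $\gamma_2$ by $2\left(\frac{1}{a}+\frac{1}{b}-\frac{1}{a+b}\right)\le 3$, compares this with the exact drop of $3$ under a smooth toric blow-down, and concludes $\gamma_2(S)\le\gamma_2(S')\le 0$ for a smooth model $S'$ with $\rho(S')=\rho(S)\ge 2$, quoting the known smooth result for the last inequality. You instead evaluate $\gamma_2(S)$ in closed form, $\gamma_2(S)=12-\sum_i\left(d_i+\frac{2}{d_i}\right)$, by combining $K_S^2=\gamma_2(S)+2\sum_i 1/d_i$, the crepancy of the minimal resolution of the $A_{d_i-1}$ points (which is exactly where the Gorenstein hypothesis enters, just as it does in the paper's crepant-resolution step), and Noether's formula on the smooth model; the elementary inequality $d+2/d\ge 3$ then finishes the argument. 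Your version is more self-contained --- it needs neither the smooth classification statement nor \cite[Proposition 4.2]{satosumi} --- and yields the sharper quantitative bound $\gamma_2(S)\le 12-3n$. It also explicitly treats the ``if'' direction for \emph{singular} surfaces of $\rho(S)=1$ (e.g.\ $\mathbb{P}(1,1,2)$), a case the paper's proof passes over silently; your wall-relation argument works, though the quickest justification is simply that each $D_i$ is ample when $\rho(S)=1$, whence $D_i^2>0$. The two proofs rest on the same underlying facts (two-dimensional Gorenstein toric singularities are $A$-type du Val points, plus bookkeeping of $K^2$ through a crepant resolution); the difference is that the paper tracks the change of $\gamma_2$ one contraction at a time, while you transport $K^2$ to the smooth model once and let Noether's formula do the counting.
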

\begin{proof}
If $S$ is nonsingular, then the statement is obviously true 
(for example, see \cite[Proposition 4.3]{sato2}). 

Suppose $\rho(S)\ge 2$. 
Only we have to do is to show that 
$S$ is not $\gamma_2$-positive.

First, we remark that for a blow-up 
$\psi: S_1\to S_2$ between 
smooth projective toric surfaces 
$S_1$ and $S_2$, we have 
$\gamma_2(S_2)-\gamma_2(S_1)=3$. 

Next, we investigate primitive 
crepant contractions. So, let 
$\psi: S_1\to S_2$ be a toric 
morphism between Gorenstein projective 
toric surfaces $S_1$ and $S_2$ such that 
$\G(\Sigma_{S_1})=
\G(\Sigma_{S_2})\cup \{y\}$ 
and $ax_1+bx_2=qy$ for some 
$2$-dimensional cone $\mathbb{R}_{\ge 0}x_1+
\mathbb{R}_{\ge 0}x_2\in\Sigma_{S_2}$, 
where $a,b,q$ are coprime positive 
integers and $x_1,x_2\in\G(\Sigma_{S_2})$. 
Then, \cite[Proposition 4.2]{satosumi} 
says that 
\[
\gamma_2(S_1)=\gamma_2(S_2)
-\frac{a^2+b^2+q^2}{abq}.
\]
Since $\psi$ is crepant if and only if 
$a+b=q$, this equality is equivalent to 
\[
\gamma_2(S_2)-\gamma_2(S_1)=
\frac{a^2+b^2+(a+b)^2}{ab(a+b)}=
2\left(\frac{1}{a}+\frac{1}{b}-
\frac{1}{a+b}\right).
\]
Put 
\[
f(a,b):= \left(\frac{1}{a}+\frac{1}{b}-
\frac{1}{a+b}\right).
\]
Then, 
\begin{eqnarray*}
f(a+1,b)-f(a,b) & = & \left(\frac{1}{a+1}-
\frac{1}{a}\right)-\left(\frac{1}{a+b+1}-
\frac{1}{a+b}\right) \\
& = &
-\frac{1}{a(a+1)}+\frac{1}{(a+b)(a+b+1)}<0.
\end{eqnarray*}
This means that $f(a,b)$ takes the 
maximum value at $(a,b)=(1,1)$. Thus, 
we have 
\[
\gamma_2(S_2)-\gamma_2(S_1)\le 
\frac{1^2+1^2+2^2}{1\times 1\times 2}=3.
\]

There exists the crepant resolution 
$\pi:\overline{S}\to S$ which is a 
finite succession of primitive 
crepant contractions as above. 
On the other hand, 
there exists a toric morphism 
$\varphi:\overline{S}\to S'$ 
which is a finite succession of 
blow-ups such that 
$S'$ is a smooth projective toric 
surface of $\rho(S')=\rho(S)$. 
Thus, we have 
\[
\gamma_2(S')-\gamma_2(\overline{S})
=3\left(\rho(\overline{S})-\rho(S')\right), 
\]
while 
\[
\gamma_2(S)-\gamma_2(\overline{S})
\le 3\left(\rho(\overline{S})-\rho(S)\right). 
\]
Therefore, 
$\gamma_2(S)\le\gamma_2(S')\le 0$, that is, 
$S$ is not $\gamma_2$-positive.
\end{proof}

However, there exists a 
Gorenstein $\mathbb{Q}$-factorial 
projective $\gamma_2$-positive 
toric $3$-fold $X$ of $\rho(X)=2$: 

\begin{ex}\label{gorenstein3dim}
Let $X=X_{\Sigma}$ be a $\mathbb{Q}$-factorial 
Gorenstein 
toric Fano $3$-fold such that 
the primitive generators of $1$-dimensional 
cones in $\Sigma$ are
\[
x_1=(1,0,0),\ x_2=(0,1,0),\ x_3=(0,0,1),\  x_4=(0,-2,-1),\ x_5=(-1,-1,0).
\]
The singular locus of $X$ is the torus invariant curve corresponding to 
the cone $\mathbb{R}_{\ge 0}x_3+\mathbb{R}_{\ge 0}x_4$. 
The hyperplane passing through $x_1,x_3,x_4$ and 
$x_3,x_4,x_5$ are 
\[
\left\{(t_1,t_2,t_3)\in N_{\mathbb{R}}^3\,\left|\,
t_1-t_2+t_3=1\right.\right\}\mbox{ and }
\left\{(t_1,t_2,t_3)\in N_{\mathbb{R}}^3\,\left|\,
-t_2+t_3=1\right.\right\},
\]
respectively. 
Thus, $X$ is Gorenstein. 
There exist exactly two extremal 
wall relations 
\[
2x_1+2x_5=x_3+x_4\mbox{ and }
2x_2+x_3+x_4=0.
\]
Let $D_1,\ldots,D_5$ be the torus invariant 
prime divisors corresponding to 
$x_1,\ldots,x_5$, respectively. 
By Theorem \ref{ne2gene}, it is sufficient to 
check the positivity for $D_4$. 
The wall relations associated to 
$\mathbb{R}_{\ge 0}x_1+\mathbb{R}_{\ge 0}x_4$ 
and $\mathbb{R}_{\ge 0}x_2+\mathbb{R}_{\ge 0}x_4$ 
are 
\[
2x_2+x_3+x_4=0\mbox{ and }
x_1+x_5+x_2=0, 
\]
respectively. By Proposition \ref{rho2lemma}, 
there exists a positive rational number $\alpha$ such that
\[
\alpha\gamma_2(X)\cdot D_4  =  -1\times 1\times (2^2+1^2+1^2)
+2\times 2\times 1\times (2\times 1)
-2\times 0\times(1^2+1^2+1^2) 
 =  2>0.
\]
Therefore, $X$ is $\gamma_2$-positive, but 
$\rho(X)=2$. 
\end{ex}

\end{document}